\documentclass[a4paper,11pt,reqno]{amsart}          

\usepackage{amsfonts,amsmath,latexsym,amssymb} 
\usepackage{amsthm}      
\usepackage[dvipsnames]{xcolor}          
\usepackage{mathrsfs,upref}

\usepackage[foot]{amsaddr}
\usepackage[left=30mm,right=30mm,top=30mm,bottom=30mm]{geometry}
\usepackage{hyperref}
\usepackage{cite}

\usepackage{float}
\usepackage{tikz}
\usepackage{circuitikz}
\usepackage{hyperref}
\usepackage{mathtools}
\usepackage{enumitem}
\usepackage{pgfplots}
\pgfplotsset{compat=1.18}

\allowdisplaybreaks

\theoremstyle{plain}
\newtheorem{theorem}{Theorem}[section]
\newtheorem*{theorem*}{Theorem}
\newtheorem{lemma}[theorem]{Lemma}
\newtheorem*{lemma*}{Lemma}

\theoremstyle{remark}
\newtheorem{remark}[theorem]{Remark}
\newtheorem*{remark*}{Remark}

\newtheorem*{example*}{Example}
\theoremstyle{definition}

\numberwithin{equation}{section}

\DeclareMathOperator{\sgn}{sgn}

\renewcommand{\L}{\mathcal{L}}

\renewcommand{\H}{\mathscr{H}}
\renewcommand{\L}{\mathscr{L}}
\newcommand{\E}{\mathscr{E}}
\newcommand{\V}{\mathscr{V}}
\newcommand{\Z}{\mathbb{Z}}
\newcommand{\R}{\mathbb{R}}
\newcommand{\C}{\mathbb{C}}
\newcommand{\ceq}{\coloneqq}
\newcommand{\ds}{\displaystyle}
\newcommand{\wt}{\widetilde}

\title[Periodic discrete graphs with prescribed spectrum]{Periodic discrete graphs with prescribed spectrum}

\author[Andrii Khrabustovskyi]{Andrii Khrabustovskyi$^1$}
\address{$^1$\,Department of Physics, Faculty of Science, University of Hradec Kr\'{a}lov\'{e}, Rokitansk\'eho 62, 50003 Hradec Kr\'alov\'e, Czech Republic}
\email{andrii.khrabustovskyi@uhk.cz}

\author[Anna Muranova]{Anna Muranova$^2$}
\address{$^2$\,Faculty of Mathematics and Computer Science, University of Warmia and Mazury in Olsztyn, ul. Słoneczna 54, 10-710 Olsztyn, Poland} 
\email{anna.muranova@uwm.edu.pl}

\begin{document}

\begin{abstract}
We construct a periodic weighted graph whose discrete Laplacian has a spectrum with precisely $n$ gaps. Moreover, we show  that by an appropriate choice of the weights, the endpoints of these gaps, as well as the upper edge of the spectrum, attain the prescribed values.
The underlying graph has a brush-like geometry: it consists of an infinite  chain of vertices, each of which is connected to $n$ additional pendant vertices by extra edges.
Semi-explicit formulae for the weight coefficients are provided: some of the coefficients are determined explicitly, while others are given as roots of an explicitly determined polynomial.
\end{abstract}
\keywords{periodic discrete graph, weighted graph, discrete Laplacian, spectral gaps, control of spectrum}  

\subjclass{47B39, 35P05, 81Q35, 05C22,  05C63}

\maketitle
 
\thispagestyle{empty}

\section{Introduction}
\label{sec1}

The topic of this paper lies within the spectral theory of Laplacians on periodic discrete graphs. These operators act on functions defined on the vertices as difference operators, while the edges play a secondary role, serving merely as labels that encode the connectivity between vertices.
This is an active area of research that brings together several branches of mathematics and has a wide range of applications, e.g. in  nano-technology, crystallography, optics and chemistry. We refer the reader to the monographs \cite{BK12,KN22,KLW21,BH12, Ha99, Sun13} and references therein.

In this paper, we study Laplacians on \emph{periodic discrete graphs}.
For such operators, Floquet--Bloch theory (see, e.g., \cite[Chapter~4]{BK12} for its version for discrete graphs) 
implies that the spectrum exhibits the so-called \emph{band-gap structure}, that is, it consists of a union of finitely many closed finite intervals (bands) possibly separated by open intervals (gaps). In general, however, gaps are not guaranteed to occur, since the bands may overlap, causing the corresponding gaps to close. For example, the spectrum of the combinatorial discrete Laplacian
$$
\Delta:\ell^2(\mathbb Z)\to\ell^2(\mathbb Z), \qquad
(\Delta f)(i) = 2f(i)-f(i-1)-f(i+1),\quad i\in\Z
$$
on the infinite chain of vertices $\{\dots,-2,-1,0,1,2,\dots\}$ has no gaps,
namely
$\sigma(\Delta)=[0,4].$

The existence of spectral gaps is important in various  applications, since the spectrum encodes the transport properties of the underlying medium, and spectral gaps correspond to ranges of wavelengths for which wave propagation through the medium is prohibited. We refer the reader to \cite{F93} for applications to dielectric materials and to \cite{CGPNG09, NG04,SDD98, SW58} for connections between the spectral properties of graph operators and the behavior of nanomaterials, especially with regard to photonic band gaps
(see, e.g., \cite{BDE93}).

A method for opening spectral gaps was proposed by Schenker and Aizenman in \cite{SA00}.
The idea is to start with a fixed graph and ``decorate'' it by ``gluing'' to each vertex a copy of a given finite graph.
Note that this method is originally not restricted to periodic graphs.
See also \cite{Ku05}, where a similar idea was applied to differential operators on metric graphs (the so-called \emph{quantum graphs}).
\smallskip

In the current paper we investigate spectral properties of some specific class of periodic  graphs with weights, i.e. additionally to
the combinatorial structure (vertices linked by edges)
we equip a graph $\Gamma$ with two positive 
functions $\mu$ and $m$ defined on the set of edges
and  on the set of vertices, respectively.
The graph has a brush-like geometry: it consists of an infinite  chain of vertices, each of which is connected to $n$ additional pendant vertices by extra edges -- see Figure~\ref{fig1}.
The Laplacian $\L$ on the weighted graph $\Gamma$  
acts on   $f:\V\to\C$, where $\mathscr{V}$ is the vertex set of $\Gamma$, as follows:
\begin{gather*}
(\L f)(v)=(m(v))^{-1}\sum\mu(e)(f(v) - f(u)) ,
\quad v\in\V.
\end{gather*}
where the sum above is taken over all vertices $u$ adjacent to $v$, with $e$ denoting the edge connecting $u$ and $v$.

The main peculiarity of these graphs is that their spectral gaps can be effectively controlled through an appropriate choice of the weight functions. Firstly, we show (Theorem~\ref{th1}) that, under rather mild assumptions on the weights (see \eqref{aa}), the spectrum of $\L$ possesses exactly $n$ gaps. Our main result (Theorem~\ref{th2}) addresses the  {inverse problem}: 
\emph{we prove that the gap edges, as well as the upper edge of the spectrum, can be prescribed arbitrarily and realized by a suitable choice of the weights}. 

Semi-explicit formulae for the weights realizing the prescribed spectral structure are obtained. By this we mean that some of the weight coefficients are not given in closed form, but  as roots of a certain polynomial whose coefficients are explicitly expressed in terms of the prescribed gap edges.

For periodic quantum graphs, this type of inverse problem was studied by Barseghyan and the first author in \cite{BK15}. The result obtained there is asymptotic in nature: a sequence of operators was constructed whose first $n$ spectral gaps converge to prescribed intervals. At the end of the paper, in Remark~\ref{rem:comparison}, we compare the results of \cite{BK15} with those of the present paper.
\smallskip

The paper is organized as follows. In the next section, we introduce the weighted graph and the associated Laplacian, and describe its spectrum. In Section~3 we prove some auxiliary technical lemmas. In Section~4 we investigate several properties of the spectrum. The inverse problem  is then solved in Section~5.

\section{Graph $\Gamma$, operator $\mathscr{H}$ and its spectrum}
\label{sec2}

Let $\Gamma=(\V,\E)$ be a  graph with vertex set
$
\V=\{v_{i,j}\,:\ i\in\mathbb Z,\ j=0,\dots,n\}
$
and edge set
$
\E=\{e_{i,j}\,:\ i\in\mathbb Z,\ j=0,\dots,n\}.
$
The edges connect the vertices as follows:
\begin{gather}\label{graph:structure}
v_{i,0}\overset{e_{i,0}}\sim v_{i+1,0},
\quad i\in\mathbb Z,
\qquad
v_{i,0}\overset{e_{i,j}}\sim v_{i,j},
\quad i\in\mathbb Z,\ j=1,\dots,n.
\end{gather}
Here, the notation
$
u\overset{e}\sim v
$
means that the vertices $u$ and $v$ are connected by the edge $e$.
The graph is depicted in Figure~\ref{fig1}.
Note that we consider the edges  
as relations between vertices, rather then physical or geometrical links.

We equip the graph $\Gamma$ with two  
weight functions $\mu:\E\to \R$ and $m:\V\to \R$   defined by
\begin{gather}\label{mu:m}
	\mu(e_{i,j})=\mu_j,\quad i\in\Z,\ j=0,\dots,n,
	\qquad
	m(v_{i,j})=m_j,\quad i\in\Z,\ j=0,\dots,n,
\end{gather}
where $\mu_j,\, m_j$, $j=0,\dots,n$, are positive numbers.

\begin{figure}[ht]
\centering
\begin{tikzpicture}[
    scale=1,
    vertex/.style={circle,fill=black,inner sep=1.6pt},
    every node/.style={font=\small}
]

\foreach \i in {0,1,2, 3}{
    \node[vertex] (v0\i) at (3*\i,0) {};
    \node at ($(v0\i)+(0,0.18)$) {$v_{\i,0}$};
}

\foreach \i in {0,1,2, 3}{

    \node[vertex] (v1\i) at ($(v0\i)+(-120:2)$) {};
    \node at ($(v1\i)+(0,-0.23)$) {$v_{\i,1}$};

    \node[vertex] (v2\i) at ($(v0\i)+(270:2.2)$) {};
    \node at ($(v2\i)+(0,-0.23)$) {$v_{\i,2}$};

    \node[vertex] (v3\i) at ($(v0\i)+(-60:2)$) {};
    \node at ($(v3\i)+(0,-0.23)$) {$v_{\i,3}$};
}

\foreach \i/\k in {0/1,1/2, 2/3}{
    \draw (v0\i) --
        node[above=0pt,pos=0.5] {$e_{\i,0}$}
        (v0\k);
}

\foreach \i in {0,1,2,3}{

    \draw (v0\i) --
        node[pos=0.52,left=-2pt] {$e_{\i,1}$}
        (v1\i);

    \draw (v0\i) --
        node[pos=0.5,right=-2pt] {$e_{\i,2}$}
        (v2\i);

    \draw (v0\i) --
        node[pos=0.52,right=-2pt] {$e_{\i,3}$}
        (v3\i);
}

\node at (-2,0) {$\cdots$};
\node at (11,0) {$\cdots$};

\node at (-2,-1.8) {$\cdots$};
\node at (11,-1.8) {$\cdots$};

\end{tikzpicture}
\caption{The graph $\Gamma$ for $n=3$.}
\label{fig1}
\end{figure}

Now, we  introduce the Laplacian $\L$ on the weighted graph $\Gamma$ (cf.~\cite[Definition~1.2.4]{BK12} or \cite[Section~2.2]{KN22}).
We define the Hilbert space $\H$ by
$$
\H=\Big\{f:\V\to\C:\quad\sum_{i\in\Z}\sum_{j=0}^n m_j|f(v_{i,j})|^2<\infty\Big\}
$$ 
equipped with the inner product 
$(f,g)_{\H}= \sum_{i\in\Z}\sum_{j=0}^n m_j f(v_{i,j})\overline{g(v_{i,j})}.$ 
Then we introduce the operator $\L:\H\to \H$ by
\begin{gather}
\label{L:action}
(\L f)(v)=\frac{1}{m(v)}\sum_{u\overset{e}\sim v}\mu(e)(f(v) - f(u)),\quad v\in\V.
\end{gather}
The operator $\L$ is bounded, non-negative, and self-adjoint.

We denote:
\begin{gather}\label{ab:notations} 
a_j\ceq \mu_j/m_j,\ j=1,\dots,n,\qquad 
b_j\ceq \mu_j/m_0,\ j=0,\dots,n.
\end{gather}
We assume that the numbers $a_j$ are pairwise distinct. Without loss of generality, we may therefore order them so that
\begin{gather}\label{aa}
	a_j<a_{j+1},\quad j=1,\dots,n-1.
\end{gather}
 
To describe the spectrum of the operator $\L$ we introduce a rational function
\begin{gather}\label{F}
	F(x)=x\left(\sum_{j=1}^n\frac{b_j}{a_j-x}+1\right).
\end{gather} 
The function $F(x)$ is strictly increasing on each interval $(a_j,a_{j+1})$, as well as on $(-\infty,a_1)$ and on $(a_n,\infty)$. Moreover, one has
$$
\lim_{x\to\pm\infty}F(x)=\pm\infty,\quad
\lim_{x\to a_j\pm 0}F(x)=\mp\infty,\quad
F(0)=0.
$$
Consequently, $F(x)$ has exactly $n+1$ roots $s_j\in[0,\infty)$, $j=0,\dots,n$,
which we enumerate in ascending order. Similarly, the equation
\begin{gather}\label{t:eq}
F(x)=4b_0
\end{gather}
has exactly $n+1$ positive roots $t_j$, $j=1,\dots,n+1$,
which we also enumerate in ascending order.
Clearly (see Figure~\ref{fig2}),
\begin{gather}
\label{interlace}
0=s_0,\qquad
s_{j-1}<t_j<a_j<s_j,\ j=1,\dots,n,\qquad
s_n<t_{n+1}.
\end{gather}

\begin{figure}[ht]
	\centering
\begin{tikzpicture}
	
	\begin{axis}[
		width=15cm,
		height=8cm,
		axis lines=middle,
		xlabel={$x$},
		xmin=-1,
		xmax=11,
		ymin=-20,
		ymax=20,
		samples=300,
		restrict y to domain=-20:20,
		xtick={0.01,0.74,1,1.48,2.1,3,4.18,4.56,5,6.4,8.6},
		xticklabels={$s_0$,$t_1$,$a_1$,$s_1$,$t_2$,$a_2$,$s_2$,$t_3$,$a_3$,$s_3$,$t_4$},
		xticklabel style={
			text height=1.5ex,
			text depth=.25ex
		},
		ytick={4},
		yticklabels={$4b_0$}
		]
		
		\addplot[dashed,gray] coordinates {(1,-20) (1,20)};
		\addplot[dashed,gray] coordinates {(3,-20) (3,20)};
		\addplot[dashed,gray] coordinates {(5,-20) (5,20)};
		
		\addplot[black,thick,domain=-1:0.98]
		{x*(1 + 1/(1-x) + 1.5/(3-x) + 0.5/(5-x))};
		
		\addplot[black,thick,domain=1.02:2.98]
		{x*(1 + 1/(1-x) + 1.5/(3-x) + 0.5/(5-x))};
		
		\addplot[black,thick,domain=3.02:4.98]
		{x*(1 + 1/(1-x) + 1.5/(3-x) + 0.5/(5-x))};
		
		\addplot[black,thick,domain=5.02:10]
		{x*(1 + 1/(1-x) + 1.5/(3-x) + 0.5/(5-x))};
		
		\addplot[dashed,thick,domain=0:10]{4}; 
		
		\addplot[dashed,gray] coordinates {(0.74,0) (0.74,4)};
		\addplot[dashed,gray] coordinates {(2.1,0) (2.1,4)};
		\addplot[dashed,gray] coordinates {(4.56,0) (4.56,4)};
		\addplot[dashed,gray] coordinates {(8.6,0) (8.6,4)};
		
	\end{axis}
	
\end{tikzpicture}
\caption{The function $F(x)$ for $n=3$}
\label{fig2}
\end{figure}

We are now in the position to formulate the first result.

\begin{theorem}\label{th1}
	One has:
	\begin{gather}\label{th1:eq}
	\ds\sigma(\L)=
	\bigcup_{j=1}^{n+1} (s_{j-1},t_j)=
	[0,t_{n+1}]\setminus\Big( \bigcup_{j=1}^n (t_j,s_j)\Big).
	\end{gather}
\end{theorem}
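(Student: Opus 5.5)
The approach is Floquet--Bloch theory (e.g.\ \cite[Chapter~4]{BK12}). Since the weights \eqref{mu:m} are invariant under the shift $i\mapsto i+1$, the operator $\L$ is unitarily equivalent to a direct integral over $\theta\in[-\pi,\pi]$ of fibre operators $\L(\theta)$. Each fibre acts on $\C^{n+1}$, whose entries are the values $\varphi_0,\dots,\varphi_n$ on one period cell $\{v_{0,0},\dots,v_{0,n}\}$. Consequently
\[
\sigma(\L)=\bigcup_{\theta\in[-\pi,\pi]}\sigma(\L(\theta)).
\]
The fibre $\L(\theta)$ is obtained by inserting the quasi-periodic ansatz $f(v_{i,j})=e^{\mathrm{i}\theta i}\varphi_j$ into \eqref{L:action}. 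Each $\L(\theta)$ is self-adjoint in the weighted inner product, so it has $n+1$ real eigenvalues $\lambda_1(\theta)\le\dots\le\lambda_{n+1}(\theta)$, and these depend continuously on $\theta$. Thus $\sigma(\L)$ is the union of the $n+1$ closed intervals $\lambda_k([-\pi,\pi])$, and it remains to identify these intervals.

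Write out the eigenvalue problem $\L(\theta)\varphi=\lambda\varphi$ in the notation \eqref{ab:notations}. At a pendant vertex $j$ the equation is $a_j(\varphi_j-\varphi_0)=\lambda\varphi_j$. Whenever $\lambda\ne a_j$ this gives $\varphi_j=a_j\varphi_0/(a_j-\lambda)$. At the chain vertex the equation is
\[
b_0(2-2\cos\theta)\varphi_0+\sum_{j=1}^n b_j(\varphi_0-\varphi_j)=\lambda\varphi_0 .
\]
Substituting $\varphi_0-\varphi_j=-\lambda\varphi_0/(a_j-\lambda)$ and cancelling $\varphi_0$ yields the dispersion relation
\[
F(\lambda)=b_0(2-2\cos\theta).
\]
The cancellation is legitimate because $\varphi_0\neq0$: if $\varphi_0=0$ and $\lambda\ne a_j$ for all $j$, then every $\varphi_j=0$ as well.

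Next I exclude $\lambda=a_j$ as a fibre eigenvalue. In that case the pendant equation for $j$ forces $\varphi_0=0$. The pendant equations for $k\neq j$ then give $\varphi_k=0$, since the $a_k$ are pairwise distinct by \eqref{aa}. The chain-vertex equation finally reduces to $-b_j\varphi_j=0$, so $\varphi=0$. Hence for every $\theta$ the spectrum $\sigma(\L(\theta))$ coincides exactly with the solution set of $F(\lambda)=c$, where $c=b_0(2-2\cos\theta)\in[0,4b_0]$.

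By the monotonicity and limit properties of $F$ stated before the theorem, the equation $F(\lambda)=c$ has exactly one root on each of the $n+1$ branches $(-\infty,a_1)$, $(a_1,a_2)$, \dots, $(a_n,\infty)$. These account for all $n+1$ eigenvalues, so $\lambda_k(\theta)$ is the root on the $k$-th branch. As $c$ runs through $[0,4b_0]$, the monotonicity of $F$ on the $k$-th branch shows that this root runs through exactly $[s_{k-1},t_k]$, because $F(s_{k-1})=0$ and $F(t_k)=4b_0$. Taking the union over $k$ gives the first equality in \eqref{th1:eq}; the bands should be read as closed intervals, since the union over a compact $\theta$-range is closed. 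The second equality then follows from the ordering \eqref{interlace}.

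The main obstacle is rigor in the Floquet reduction: making sure the fibres are given by the formula above and that no eigenvalues are lost or spurious. The only delicate point is the exclusion of the poles $\lambda=a_j$ handled above. All other steps are elementary.
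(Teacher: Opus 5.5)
Your proposal is correct and follows essentially the same route as the paper: Floquet--Bloch reduction to the $(n+1)\times(n+1)$ fibre problem, exclusion of $\lambda=a_j$, elimination of the pendant values to obtain $F(\lambda)=4b_0\sin^2(\theta/2)$, and monotonicity of $F$ on each branch to identify the bands as $[s_{k-1},t_k]$. You are also right that the bands must be read as closed intervals, which is what the paper's proof actually gives ($I_j=[s_{j-1},t_j]$), and your counting argument identifying $\lambda_k(\theta)$ with the root on the $k$-th branch spells out a step the paper leaves to the figure.
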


\begin{proof}
Since the graph $\Gamma$ is $\Z$-periodic and the  operator $\L$ commutes with the $\Z$-action, the standard Floquet--Bloch theory applies 
(see \cite[Chapter~4]{BK12} for more details). In particular, the spectrum $\sigma(\L)$ has a band-gap structure: it is a union of closed finite intervals $I_j$ (spectral bands) determined by the eigenvalues of the fiber operators $\L(k)$, $k\in[0,2\pi)$, acting in the space $\H(k)$  of $k$-quasi-periodic functions $f:\Gamma\to\C$, that is,
$$f(v_{i,j})=e^{\mathrm{i} k}f(v_{i-1,j}),\quad i\in\Z,\ j=0,\dots,n,$$
with the action of $\L(k)$ given by \eqref{L:action}.
The quasiperiodicity condition implies that  the spectral problem 
\begin{gather}\label{Lf=lambdaf}
	\L(k)f=\lambda f,\quad 0\not=f\in\H(k)
\end{gather}
reduces to the finite-dimensional  problem
\begin{gather}\label{Lf=lambdaf:matrix}
L(k)f=\lambda f,\quad 0\not=f=(f_0,\dots,f_n)^T\in\C^{n+1},
\end{gather}
where $L(k)$ is the $(n+1)\times(n+1)$  matrix given by 
$$
L(k)=
\begin{pmatrix}
	b_0(1-e^{\mathrm{i}k})+b_0(1-e^{-\mathrm{i}k}) + \displaystyle\sum_{j=1}^n b_j &\quad -b_1 & -b_2 & \cdots & -b_n \\
	-a_1 &\quad a_1 & 0 & \cdots & 0 \\
	-a_2 &\quad 0 & a_2 & \cdots & 0 \\
	\vdots &\quad \vdots & \vdots & \ddots & \vdots \\
	-a_n &\quad 0 & 0 & \cdots & a_n
\end{pmatrix}
$$
(recall that $a_j$, $j=1,\dots,n$ and $b_j$, $j=0,\dots,n$ are defined by \eqref{ab:notations}).
To obtain \eqref{Lf=lambdaf:matrix} one  writes the equation \eqref{Lf=lambdaf} at  the vertices 
$v_{0,j}$, $j=0,\dots,n$, which form a fundamental domain of $\Gamma$,
and use the quasiperiodicity conditions
$$f(v_{1,0})=f(v_{0,0})e^{\mathrm{i}k},\qquad f(v_{-1,0})=f(v_{0,0})e^{-\mathrm{i}k}.$$
The matrix $L(k)$ is symmetric in $\C^{n+1}$ equipped with the weighted inner product
$$(f,g)_{\C^{n+1}}=\sum_{j=0}^n m_j f_j\overline{g_j}=
m_0 \Big(f_0\overline{g_0}+\sum_{j=1}^n b_j a_j^{-1}\overline{g_j}\Big).$$
We denote its eigenvalues, ordered increasingly and counted with multiplicities, by
$$\lambda_1(k)\leq \lambda_2(k)\le\dots\le \lambda_{n+1}(k).$$ 
For each fixed $j$,   $\lambda_j(k)$ depends continuously 
on $k\in [0,2\pi]$. Consequently, the set 
$$
I_j\ceq \bigcup_{k\in [0,2\pi]}\lambda_j(k),
$$
is a compact interval.
The Floquet-Bloch theory yields the representation
\begin{gather}\label{Floquet-Bloch}
	\sigma(\L)=\bigcup_{j=1}^{n+1} I_j.
\end{gather} 
Therefore, the description of the spectrum of $\L$ reduces to the analysis of the eigenvalues of the matrices $L(k)$.
 
Taking into account \eqref{aa}, 
one can easily show that if $\lambda= a_j$ for some $j\in\{1,\dots,n\}$, then 
the   problem \eqref{Lf=lambdaf:matrix}  possesses only a zero solution $f$.
Thus, 
\begin{gather}\label{lambdanotin}
\lambda\notin \{a_j,\ j=1,\dots,n\}.
\end{gather} 
The matrix equality \eqref{Lf=lambdaf:matrix} 
yields the system of $n+1$ equations for unknowns $f_j$, $j=0,\dots,n$. 
From the last $n$ equations of this systems we deduce, taking into account \eqref{lambdanotin}:
\begin{gather}\label{fj}
f_j=\frac{a_j f_0}{a_j-\lambda},\quad j=1,\dots,n.
\end{gather}
Substituting \eqref{fj} into the first equation and taking into account that 
$f_0\not=0$ (since otherwise \eqref{fj} would imply  $f_j=0$ for all $j=1,\dots,n$),
we arrive, after straightforward computations, at the following equation for $\lambda$:
\begin{gather*}
4b_0\sin^2{k\over 2}=F(\lambda),
\end{gather*}
where the function $F$ is given by \eqref{F}. 
Thus, the eigenvalues $\lambda_j(k)$ correspond to the intersection points of the graph of $F(x)$ with the horizontal line $y=4b_0\sin^2{k\over 2}$.
The properties of the function $F(x)$ (see Figure~\ref{fig2}) imply that
$$
\forall k\in [0,2\pi]:\quad \lambda_{j}(0)\le \lambda_{j}(k)\le \lambda_j(\pi),\quad j=1,\dots,n.
$$
By definition of the numbers $s_j$ and $t_j$, we get 
$\lambda_j(0)=s_{j-1}$ and $\lambda_j(\pi)=t_{j}$.
Hence, $I_j=[s_{j-1},t_j]$, which together with 
\eqref{Floquet-Bloch} implies the first equality in \eqref{th1:eq}.
The second equality  follows from  \eqref{interlace}. 
The theorem is proven.
\end{proof}

Theorem~\ref{th1} demonstrates that the operator $\L$ has $n$ gaps  $(t_j,s_j)$, $j=1,\dots,n$. 
Our main objective is to solve the \emph{inverse problem}: to prove that, through a suitable choice of the coefficients $\mu_j$ and $m_j$, the positions of the gap endpoints, as well as the upper spectral edge $t_{n+1}$, can be made to coincide with prescribed values.\smallskip

The rest of the paper is organized as follows.
In Section~\ref{sec3}, we prove two auxiliary technical lemmas concerning a polynomial whose roots are the numbers $a_j$.
In Section~\ref{sec4}, we derive several useful relations linking $a_j$, $b_j$, and $s_j$, $t_j$. Finally, in Section~\ref{th2}, we formulate and prove the main result of the paper, Theorem~\ref{th2}, which solves the inverse problem stated above.

\section{Auxiliary lemmas}
\label{sec3}

To prove the main result of this work, we will need two technical lemmas stated below. Since their proof is rather lengthy and technical, we devote a separate section to them. 

\begin{lemma}\label{lemma:Q}
Let $B>0$.
Let $s_j$, $j=0,\dots,n$ and $t_j$, $j =1,\dots, n+1$ be given  numbers satisfying 
\begin{gather}\label{st}
0=s_0< t_1,\quad t_j< s_j< t_{j+1},\ j=1,\dots,n.
\end{gather}
Define the polynomial  $Q(x)$  of degree $n$   by
\begin{gather}\label{Qdef}
Q(x)=\prod_{i=1}^n(t_i-x)-\dfrac 1B\sum_{j=1}^n \left( \prod_{m=0}^n (s_m-t_j) \prod_{\substack{i=1\\i\ne j}}^n\dfrac{x-t_i}{t_j-t_i}\right).
\end{gather}
Then
 \begin{equation}\label{eq::Qslstatement}
\forall \ell\in\{0,\dots,n\}:\quad Q(s_\ell)=\left(\prod_{i=1}^n (t_i-s_\ell)\right)\left(1+\dfrac{1}{B}\left(\sum_{\substack{i=1\\i\ne \ell}}^n s_i-\sum_{\substack{i=1}}^n t_i\right)\right).
\end{equation}
\end{lemma}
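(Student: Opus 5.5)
The key observation is that the sum in \eqref{Qdef} is a Lagrange interpolation polynomial. Set
\begin{gather*}
P(x)\ceq\prod_{m=0}^n(s_m-x),\qquad T(x)\ceq\prod_{i=1}^n(t_i-x).
\end{gather*}
By \eqref{st} the nodes $t_1<\dots<t_n$ are pairwise distinct. Hence
\begin{gather*}
L(x)\ceq\sum_{j=1}^n P(t_j)\prod_{\substack{i=1\\i\ne j}}^n\frac{x-t_i}{t_j-t_i}
\end{gather*}
is the unique polynomial of degree at most $n-1$ that agrees with $P$ at $t_1,\dots,t_n$. With this notation, $Q(x)=T(x)-\frac1B L(x)$.

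First I would use the interpolation property. The difference $P-L$ vanishes at every $t_i$, so it is divisible by $T$. Since $\deg P=n+1$, $\deg L\le n-1$ and $\deg T=n$, we can write
\begin{gather*}
P(x)=L(x)+T(x)R(x)
\end{gather*}
with $R$ a polynomial of degree exactly $1$. Substituting $L=P-TR$ into $Q$ gives
\begin{gather*}
Q(x)=T(x)\Big(1+\frac{R(x)}{B}\Big)-\frac{P(x)}{B}.
\end{gather*}
Since $P(s_\ell)=0$ for every $\ell\in\{0,\dots,n\}$, this yields
\begin{gather*}
Q(s_\ell)=T(s_\ell)\Big(1+\frac{R(s_\ell)}{B}\Big).
\end{gather*}
So the statement \eqref{eq::Qslstatement} reduces to identifying the linear polynomial $R$.

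To determine $R$, I would compare the two top coefficients (of $x^{n+1}$ and $x^n$) in $P=L+TR$. The polynomial $L$ does not contribute to either, because $\deg L\le n-1$. Write $\sigma_s=\sum_{m=0}^n s_m$ and $\sigma_t=\sum_{i=1}^n t_i$. Then
\begin{gather*}
P(x)=(-1)^{n+1}\big(x^{n+1}-\sigma_s x^n+\dots\big),\qquad T(x)=(-1)^n\big(x^n-\sigma_t x^{n-1}+\dots\big).
\end{gather*}
The $x^{n+1}$ coefficient forces $R(x)=-x+c$. The $x^n$ coefficient then gives $(-1)^n\sigma_s=(-1)^n(c+\sigma_t)$, so $c=\sigma_s-\sigma_t$. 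Using $s_0=0$,
\begin{gather*}
R(s_\ell)=\sigma_s-s_\ell-\sigma_t=\sum_{\substack{i=1\\i\ne\ell}}^n s_i-\sum_{i=1}^n t_i,
\end{gather*}
which also covers the case $\ell=0$. This is exactly the second factor in \eqref{eq::Qslstatement}.

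The only step needing care is this coefficient bookkeeping, in particular keeping track of the signs $(-1)^n$ and $(-1)^{n+1}$. The rest is formal once $L$ is recognized as the interpolant of $P$.
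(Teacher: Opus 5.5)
Your argument is correct, but it takes a different and more economical route than the paper. The paper multiplies $Q$ by the Vandermonde product $\prod_{1\le i<k\le n}(t_k-t_i)$ to clear denominators. It then treats the correction term at $x=s_\ell$ as a polynomial $F^{(\ell)}(t_1,\dots,t_n)$ in the nodes and shows it is alternating. Using the structure theorem for alternating polynomials and a degree count, it writes this term as $-\frac1B\bigl(\sum_i x_i+C^{(\ell)}\bigr)$ times the Vandermonde polynomial. The constant $C^{(\ell)}$ is then fixed by evaluating at special points built from the $s_i$. That step needs the $s_i$ to be distinct and nonzero, and it forces separate cases $\ell=0$ and $\ell\neq 0$.

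You instead observe that the interpolant $L$ is the remainder of $P$ on division by $\prod_{i=1}^n(t_i-x)$. Everything then reduces to computing the linear quotient $R(x)=\sigma_s-\sigma_t-x$ from the two top coefficients. This is uniform in $\ell$, needs only that the $t_i$ are distinct, $B\neq 0$ and $s_0=0$, and uses no external result. It also gives more, namely the identity
\[
Q(x)+\frac{P(x)}{B}=\prod_{i=1}^n(t_i-x)\Bigl(1+\frac1B\bigl(\sigma_s-\sigma_t-x\bigr)\Bigr),
\]
valid for all $x$. From it, Lemma~\ref{lemma:properties}(ii) follows at once: with $B=4b_0$, the extra root of $Q+P/(4b_0)$ is visibly $4b_0+\sum_{i=1}^n(s_i-t_i)$. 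This avoids the Vieta argument and the separate evaluation of $Q(0)$ that the paper uses. The paper's computation, by contrast, only adds an explicit description of the correction term as a polynomial in the nodes, which is not used elsewhere in the paper.
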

 
\begin{remark}
The role of the above lemma, which at first glance may appear somewhat unrelated to our problem, is the following. We will show later (see Lemma~\ref{lemma:properties}(i)) that if $s_j$ and $t_j$ are defined as in Theorem~\ref{th1}, then the numbers $a_j$ are precisely the roots of the polynomial $Q(x)$ with $B=4b_0$. This observation will allow us to solve the inverse problem by choosing $a_j$ as the roots of $Q(x)$. However, the numbers $a_j$ chosen in this way must satisfy \eqref{interlace}, and the proof of this fact relies essentially on Lemma~\ref{lemma:Q}.
\end{remark} 
 
\begin{remark}\label{rem:alter}
It is easy to see that the leading coefficient of the polynomial $Q(x)$ is equal to $(-1)^n$ and that
$Q(t_j)= {-P(t_j)}/{B}$ for $j=1,\dots,n,$
where $P(x)$ is a polynomial of degree $n+1$ given by
\begin{gather}
\label{Pdef}
P(x)\ceq \prod_{m=0}^n (s_m-x).
\end{gather}
This follows from the fact that  the second term in the right-hand-side of \eqref{Qdef} is precisely the Lagrange interpolation polynomial of degree  $n-1$ passing through the points $(t_j,{-P(t_j)}/{B})$, $j=1,\dots,n$).
These two properties (the prescribed leading coefficient and the interpolation data at $n$ distinct points) uniquely determine the polynomial $Q$.
We will use this alternative characterization of $Q$ later on.
\end{remark}

\begin{proof}[Proof of Lemma~\ref{lemma:Q}]
Instead of $Q(x)$, we will work with the function
$$\widehat{Q}(x)\ceq Q(x)\prod_{1\le i<k\le n}(t_k-t_i)$$
This choice is motivated by the fact (see below) that it eliminates the factors $(t_j-t_i)$ appearing in the denominator of \eqref{Qdef}. 

It is easy to see that 
$$
\forall j\in\{1,\dots,n\}:\quad
\prod_{1\le i<k\le n}(t_k-t_i)=(-1)^{n-j}
\prod_{\substack{i=1\\i\ne j}}^n {(t_j-t_i)}
\prod_{\substack{1\le i<k\le n\\i\ne j,k\ne j}}(t_k-t_i)
$$
Using the above equality, we get for any fixed $\ell$, $0\le \ell\le n$:
\begin{align*}
\widehat{Q}(s_\ell)
&=\prod_{i=1}^n(t_i-s_\ell)\prod_{1\le i<k\le n}(t_k-t_i)
\\
&+\sum_{j=1}^n  \dfrac{(-1)^{n-j+1}}{B}\left(\prod_{m=0}^n(s_m-t_j)\prod_{\substack{i=1\\i\ne j}}^n {(s_\ell-t_i)}\prod_{\substack{1\le i<k\le n\\i\ne j, k\ne j}}(t_k-t_i)\right)\\
&=\prod_{i=1}^n(t_i-s_\ell)\left(\prod_{1\le i<k\le n}(t_k-t_i)+\sum_{j=1}^n  \dfrac{(-1)^{j-1}}{B}\left({\prod_{\substack{m=0\\m\ne \ell}}^n(s_m-t_j)}\prod_{\substack{1\le i<k\le n\\i\ne j, k\ne j}}(t_k-t_i)\right)\right).
\end{align*}
Thus
\begin{equation}\label{eq::wQsl}
\widehat{Q}(s_\ell)=\prod_{i=1}^n(t_i-s_\ell)\left(\prod_{1\le i<k\le n}(t_k-t_i)+F^{(\ell)}(t_1,\dots,t_n)\right),
\end{equation}
where $F^{(\ell)}(x_1,\dots, x_n)$ is a polynomial in $n$ variables given by
\begin{gather}\label{sumFj}
F^{(\ell)}(x_1,\dots, x_n)\ceq\sum_{j=1}^n F_j^{(\ell)}(x_1,\dots x_n),
\end{gather}
with
\begin{equation}\label{Fj}
F_j^{(\ell)}(x_1,\dots, x_n)\ceq\dfrac{(-1)^{j-1}}{B}{\prod_{\substack{m=0\\m\ne \ell}}^n(s_m-x_j)}\prod_{\substack{1\le i<k\le n\\i\ne j, k\ne j}}(x_k-x_i).
\end{equation}

It follows easily from the definition \eqref{Fj} of $F_j^{(\ell)}$ that 
for any fixed $p,r\in\{1,\dots,n\}$ with $p\ne r$, one has
$$
 F_j^{(\ell)}(\sigma(x_1),\dots \sigma(x_n))=\begin{cases}
-F_j^{(\ell)}(x_1,\dots x_n),& j\notin\{ p, r\}\\
-F_r^{(\ell)}(x_1,\dots x_n),& j=p,\\
-F_p^{(\ell)}(x_1,\dots x_n),& j=r,
 \end{cases}
 $$
where $\sigma(x_i)=x_i, i\notin\{ p, r\}$, $\sigma(x_r)=x_p$, $\sigma(x_p)=x_r$.
 Consequently, $F^{(\ell)}(x_1,\dots, x_n)=\sum_{j=1}^n F_j^{(\ell)}(x_1,\dots x_n)$ is an alternating polynomial.
It is well-known (see, e.g., \cite{Br08})  that any alternating polynomial is a product of the Vandermonde polynomial 
$$W(x_1,\dots x_n)\ceq \prod_{1\le i<k\le n}(x_k-x_i)$$ and a symmetric polynomial.   
Straightforward computations yield 
 $$
 \deg F^{(\ell)}=\deg F_j^{(\ell)}= \dfrac{n(n-1)}{2}+1,\qquad 
 \deg W =\dfrac{n(n-1)}{2}.
 $$
Hence 
\begin{gather}\label{FeqC}
F^{(\ell)}(x_1,\dots x_n) = S^{(\ell)}(x_1,\dots x_n) W(x_1,\dots x_n),
\end{gather}
 where 
$S^{(\ell)}(x_1,\dots x_n)$ is a symmetric polynomial with $\deg S^{(\ell)} = 1$, i.e.
 \begin{equation*}
 S^{(\ell)}(x_1,\dots,x_n)=A^{(\ell)}\left(\sum_{i=1}^n x_i+C^{(\ell)}\right),
 \end{equation*}
 with the coefficients $A^{(\ell)},\,C^{(\ell)}\in \Bbb R$   to be determined. 
 
 To find $A^{(\ell)}$, we regard the right-hand side of \eqref{FeqC} as a polynomial of the single variable $x_n$, treating all other variables as parameters. 
 In this case, its coefficient  at $x_n^n$ equals
 $
 A^{(\ell)}\prod_{1\le i<k\le n-1}(x_k-x_i).
 $
 On the other hand, if we determine this coefficient from the left-hand side of \eqref{FeqC}, i.e.  directly 
 from the definition \eqref{sumFj} of $F^{(\ell)}(x)$, we arrive at
 the expression
 $
-(1/B)\prod_{1\le i<k\le n-1}(x_k-x_i).
 $
 By equating these two expressions, we infer 
 \begin{gather}\label{A}
 A^{(\ell)}=-1/B.
 \end{gather} 
 
To proceed further (in particular, to determine the coefficient $C^{(\ell)}$), we divide our analysis into two cases.
 \smallskip
 
\noindent \emph{Case (i): $\ell\ne 0$.} We have
  $$\forall \ell\not=0\ \forall j\in\{1,\dots,n\}:\quad
 F_j^{(\ell)}(s_1,\dots,s_{\ell-1},0,s_{\ell+1},\dots, s_n)=0.
 $$
 This follows from the fact (see \eqref{Fj}) that
 $F_\ell^{(\ell)}(x_1,\dots,x_n)$ is divisible by $s_0-x_\ell$,
 and $F_j^{(\ell)}(x_1,\dots,x_n)$ is divisible by $s_j-x_j$ for every $j\neq \ell$.
 Then,  using \eqref{FeqC}--\eqref{A}, we get:
 $$
  0=F^{(\ell)}(s_1,\dots,s_{\ell-1},0,s_{\ell+1},\dots, s_n)=\frac1B\left(\sum_{\substack{i=1\\i\ne \ell}}^n s_i+C^{(\ell)}\right)(-1)^{\ell}\prod_{i=1}^n s_i\prod_{\substack{0\le i<k\le n\\i\ne \ell,k\ne \ell}}(s_k-s_i),
  $$
  and, since last two products above are non-zero, we conclude that
  $$
 C^{(\ell)}=-\sum_{\substack{i=1\\i\ne \ell}}^n s_i,\quad \forall \ell \in \{1,\dots,n\}.
$$
Hence, one has
$$
F^{(\ell)}(x_1,\dots, x_n)=-\frac1B\left(\sum_{i=1}^n x_i-\sum_{\substack{i=1\\i\ne \ell}}^ns_i\right)\prod_{1\le i<k\le n}(x_k-x_i),
\quad \forall \ell \in \{1,\dots,n\}.
$$
Substituting the above equality into \eqref{eq::wQsl}, we obtain
\begin{align*}
\widehat{Q}(s_\ell)
=\prod_{i=1}^n(t_i-s_\ell)\left(\prod_{1\le i<k\le n}(t_k-t_i)+\frac1B\left(\sum_{\substack{i=1\\i\ne \ell}}^ns_i-\sum_{i=1}^n t_i\right)\prod_{1\le i<k\le n}(t_k-t_i)\right),\;
\forall \ell \in \{1,\dots,n\},
\end{align*}
whence
\begin{gather}\label{Qsl1}
Q(s_\ell)=\dfrac{\widehat{Q}(s_\ell)}{\prod_{1\le i<k\le n}(t_k-t_i)}=\prod_{i=1}^n(t_i-s_\ell)\left(1+\dfrac1B\left(\sum_{\substack{i=1\\i\ne \ell}}^ns_i-\sum_{i=1}^n t_i\right)\right),\; \forall \ell \in \{1,\dots,n\}.
\end{gather}

\noindent \emph{Case (ii): $\ell=0$.} Then we have
  $$
\forall j\in\{1,\dots,n\}:\quad F_j^{(0)}(s_1,\dots, s_n)=0,
 $$
 which follows from the fact (see \eqref{Fj}) that  $F_j^{(0)}(x_1,\dots, x_n)$ is divisible by $s_j-x_j$.
 Then,  using \eqref{FeqC}--\eqref{A}, we obtain:
 $$
  0=F^{(0)}(s_1\dots, s_n)=-\frac1B\left(\sum_{\substack{i=1}}^n s_i+C^{(0)}\right) \prod_{1\le i<k\le n}(s_k-s_i),
  $$
  and, since last product above is non-zero, we conclude that
  $$
 C^{(0)}= -\sum_{i=1}^n s_i,
$$
Hence, one has
$$
F^{(0)}(x_1,\dots, x_n)=-\frac1B\left(\sum_{i=1}^n x_i-\sum_{i=1}^ns_i\right)\prod_{1\le i<k\le n}(x_k-x_i).
$$
Substituting the above equality into \eqref{eq::wQsl}, we obtain
\begin{align*}
\widehat{Q}(s_0)=\prod_{i=1}^n(t_i-s_0)\left(\prod_{1\le i<k\le n}(t_k-t_i)+\frac1B\left(\sum_{i=1}^ns_i-\sum_{i=1}^n t_i\right)\prod_{1\le i<k\le n}(t_k-t_i)\right),
\end{align*}
whence,
\begin{gather}\label{Qsl2}
Q(s_0)=\dfrac{\widehat{Q}(s_0)}{\prod_{1\le i<k\le n}(t_k-t_i)}=\prod_{i=1}^n(t_i-s_0)\left(1+\dfrac1B\left(\sum_{i=1}^ns_i-\sum_{i=1}^n t_i\right)\right).
\end{gather}

The statement of the lemma follows from \eqref{Qsl1}--\eqref{Qsl2}.
\end{proof}

\begin{lemma}\label{lemma:interlace}
	Let $B>0$.
	Let $s_i$, $j=0,\dots,n$ and $t_j$, $j =1,\dots, n+1$ be given  numbers satisfying \eqref{st}.
	Define the polynomial  $Q(x)$  of degree $n$   by
	\eqref{Qdef}.
	Let $a_j$, $j=1,\dots,n$, be the roots of $Q$, ordered according to increasing absolute value. Then  the property
	\begin{gather}
	\label{a:in:ts}
	t_j<a_j<s_j\quad \text{for all }j =1,\dots, n,  
	\end{gather}
	if fulfilled if and only if  the  inequality  
	\begin{gather}\label{Bcond}
	\sum_{j=1}^n t_j<B+\sum_{j=1}^{n-1} s_j
	\end{gather}
	holds true.
\end{lemma}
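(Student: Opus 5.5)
Our approach is to count sign changes of $Q$ at the points $t_1<s_1<t_2<\dots<t_n<s_n$, using Remark~\ref{rem:alter} for the values $Q(t_j)=-P(t_j)/B$ and Lemma~\ref{lemma:Q} for the values $Q(s_\ell)$. Since $Q$ has degree $n$, producing $n$ sign changes on the disjoint intervals $(t_j,s_j)$ locates all its roots.

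\emph{Signs at $t_j$.} By \eqref{st}, for $x=t_j$ we have $s_m-t_j>0$ for $m\ge j$ and $s_m-t_j<0$ for $m\le j-1$. This gives $\sgn P(t_j)=(-1)^j$, so $\sgn Q(t_j)=(-1)^{j+1}$.

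\emph{Signs at $s_\ell$, $\ell\ge1$.} Here $\prod_{i=1}^n(t_i-s_\ell)$ has exactly $\ell$ negative factors, namely $i\le \ell$, so its sign is $(-1)^\ell$. By \eqref{eq::Qslstatement}, $\sgn Q(s_\ell)=(-1)^\ell\,\sgn D_\ell$, where
$$D_\ell=1+\frac1B\Big(\sum_{i\ne\ell}s_i-\sum_{i=1}^n t_i\Big).$$
We then check monotonicity in $\ell$: $D_\ell$ decreases as $s_\ell$ grows, so $D_n\le D_\ell$ for all $\ell$, and $D_n>0$ is exactly \eqref{Bcond}. Similarly, $D_0=D_n+s_n/B$.

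\emph{Sufficiency.} Assume \eqref{Bcond}. Then all $D_\ell>0$ for $\ell=1,\dots,n$, so $\sgn Q(s_j)=(-1)^j$ while $\sgn Q(t_j)=(-1)^{j+1}$. Hence $Q$ has a root in each interval $(t_j,s_j)$, $j=1,\dots,n$. These intervals are disjoint and $\deg Q=n$, so these are all the roots, each simple and positive. Ordering by absolute value then coincides with the usual ordering, which yields \eqref{a:in:ts}.

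\emph{Necessity.} Assume \eqref{a:in:ts} holds, so all roots lie in $(t_j,s_j)$, one in each. Then $Q$ has exactly one simple root in $(t_n,s_n)$, so $\sgn Q(s_n)=-\sgn Q(t_n)=(-1)^n$. By the formula above, $\sgn Q(s_n)=(-1)^n\sgn D_n$, hence $D_n>0$, i.e.\ \eqref{Bcond}. To rule out $Q(s_n)=0$, note that \eqref{a:in:ts} gives the strict inequality $a_n<s_n$, so $Q(s_n)\neq0$ and $D_n\ne0$.

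The main obstacle is bookkeeping rather than substance: getting the signs of $P(t_j)$ and of $\prod(t_i-s_\ell)$ right, and observing that only $D_n$ matters because it is the minimal one. One subtlety is that the lemma orders roots by absolute value. This is harmless, since in both directions the roots turn out to be positive.
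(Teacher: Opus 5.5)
Your proof is correct and takes essentially the same route as the paper. The signs $\sgn Q(t_j)=(-1)^{j-1}$ come from Remark~\ref{rem:alter}, the signs of $Q(s_j)$ come from Lemma~\ref{lemma:Q}, and the reduction to \eqref{Bcond} uses that $D_\ell$ is smallest at $\ell=n$; the only difference is that the paper first localizes a root in each $(t_j,t_{j+1})$ and in $(t_n,\infty)$ and runs a single chain of equivalences, whereas you split into two directions and need only the sign change on $(t_n,s_n)$ for necessity.
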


\begin{proof}Recall that the polynomial $P(x)$ is given by \eqref{Pdef}.
Since $Q(t_j)=- {P(t_j)}/{B}$ for $j=1,\dots, n$ (see Remark~\ref{rem:alter})
and $\sgn P(t_j)=(-1)^{j}$, $j=1,\dots, n$ (this follows easily from \eqref{st}), then
	\begin{equation}\label{eq::sgnQti}
		\sgn Q(t_j)=  (-1)^{j-1},\quad j=1,\dots,n,
	\end{equation}
	whence, at the endpoints of each interval $(t_j,t_{j+1})$, $j=1,\dots,n-1$, the function $Q(x)$ is non-zero and takes opposite signs.
	Consequently, each interval $(t_j,t_{j+1})$, $j=1,\dots,n$ contains at least one root of $Q(x)$.
	Furthermore, since 
	$$
	\sgn Q(x)=(-1)^n \mbox { for sufficiently large }x,
	$$  
	then the semibounded interval $(t_n,\infty)$ also contains a root of $Q(x)$.
	From the above observations, together with the fact that 
	$a_j$, $j=1,\dots,n$, are the roots of $Q$ (recall that $\deg Q = n$) ordered by increasing
	absolute value, we conclude that
	$$
	t_j<a_j<t_{j+1}\quad \text{for all } j=1,\dots,n-1,\qquad t_n<a_n,
	$$
	and then the property \eqref{a:in:ts} holds  if and only if 
	\begin{gather}\label{sgnQ}
	\sgn Q(t_j)=-\sgn Q(s_j)\ne 0\quad \text{for all } j=1,\dots,n.
	\end{gather} 
	By \eqref{eq::sgnQti}, the condition 
	\eqref{sgnQ} is equivalent to 
	\begin{gather}\label{sgnQ1}\sgn Q(s_j)=(-1)^j\quad \text{for all } j=1,\dots,n .
	\end{gather}
	
	To investigate $\sgn Q(s_j)$ we use Lemma~\ref{lemma:Q}. Namely, since $\sgn \left(\prod_{i=1}^n(t_i-s_j)\right)=(-1)^{j}$ (this follows from \eqref{st}),
	then by  \eqref{eq::Qslstatement},
	condition \eqref{sgnQ1} holds if and only if
	$$
	1+\dfrac{1}{B}\left(\sum_{\substack{i=1\\i\ne j}}^n s_i-\sum_{\substack{i=1}}^n t_i\right)>0
	\quad \text{for all } j=1,\dots,n ,
	$$
	which is, obviously, equivalent to 
	\begin{equation}\label{sllessB}
		\sum_{i=1}^n t_i<B+\sum_{\substack{i=1\\i\ne j}}^n s_i
		\quad \text{for all } j=1,\dots,n .
	\end{equation}
	Now, since 
	$$
	\sum_{\substack{i=1\\i\ne j}}^n s_i\ge  \sum_{i=1}^{n-1} s_i
	\quad \text{for all } j=1,\dots,n 
	$$  due to \eqref{st}, then the statement \eqref{sllessB} is equivalent to 
	\eqref{Bcond}. The lemma is proven.
\end{proof}

\section{Properties of $\sigma(\L)$}
\label{sec4}

In Theorem~\ref{th1}, we computed the spectrum of the operator
$\L$ \eqref{L:action}. In this section, we establish several additional useful relations
between the parameters $a_j, b_j$, which determine the coefficients of the operator, and the quantities $s_j, t_j$, which represent the endpoints of the spectral bands. These results will be used later in the solution of the inverse problem.
 
Recall that the function $F$ is given by \eqref{F}.

\begin{lemma}\label{lemma:properties}
	Let $a_j$, $j=1,\dots,n$, and 	$b_j$, $j=0,\dots,n$, be positive numbers,
	moreover, let $a_j$ satisfy  \eqref{aa}.
	Let
	$s_j$, $j=0,\dots,n$, and $t_j$, $j=1,\dots,n+1$, denote
	the roots of the equations $F(x)=0$ and $F(x)=4b_0$, respectively,
	enumerated in ascending order (that is, the interlacing property \eqref{interlace} is fulfilled).
	Then the following holds:
	\begin{itemize}
		\item[(i)] The numbers $a_j,\ i=1,\dots, n,$ are the roots of the polynomial $Q(x)$ defined by \eqref{Qdef}, with $B=4b_0$.\smallskip 
	 
	 	\item[(ii)] 
	 The   upper edge of the spectrum $t_{n+1} $ satisfies
	 \begin{gather}\label{T}
	 t_{n+1}=	4b_0+\sum_{i=1}^{n}(s_i-t_i).
	 \end{gather}

	\end{itemize}
\end{lemma}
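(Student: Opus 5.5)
Both parts follow from clearing denominators in $F$. Set $A(x)\ceq\prod_{j=1}^n(a_j-x)$ and
$$
G(x)\ceq A(x)F(x)=x\Big(A(x)+\sum_{j=1}^n b_j\prod_{i\ne j}(a_i-x)\Big),
$$
a polynomial of degree $n+1$ with leading coefficient $(-1)^{n}\cdot(-1)=(-1)^{n+1}$. Since $F$ has no poles at $s_m$ or $t_j$ (these are not among the $a_j$, by \eqref{interlace}), its roots are exactly $s_0,\dots,s_n$. Comparing leading coefficients gives
$$
G(x)=\prod_{m=0}^n(s_m-x)=P(x),
$$
with $P$ as in \eqref{Pdef}. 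Likewise $G(x)-4b_0A(x)$ has degree $n+1$, the same leading coefficient $(-1)^{n+1}$, and roots $t_1,\dots,t_{n+1}$. Hence
$$
P(x)-4b_0A(x)=\prod_{j=1}^{n+1}(t_j-x).
$$

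For (i), we use the alternative characterization of $Q$ from Remark~\ref{rem:alter}. Evaluating the last identity at $x=t_j$, $j=1,\dots,n$, gives $P(t_j)=4b_0A(t_j)$, i.e. $A(t_j)=P(t_j)/B$ with $B=4b_0$. So the polynomial $-A(x)=\prod_j(x-a_j)\cdot(-1)^{n+1}$ looks like it has the wrong sign, and we must check carefully. Remark~\ref{rem:alter} says $Q$ has leading coefficient $(-1)^n$ and $Q(t_j)=-P(t_j)/B$. The polynomial $A$ has leading coefficient $(-1)^n$ and $A(t_j)=P(t_j)/B$. This sign mismatch indicates I have a sign slip, so I would recheck the identity $G=P$. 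The leading term of $G$ is $x\cdot(-1)^n x^n$, and the $b_j$-terms have lower degree, so the leading coefficient is $(-1)^n$. That of $P$ is $(-1)^{n+1}$, so in fact $G=-P$. Then $-P(x)-4b_0A(x)=-\prod(t_j-x)$, which vanishes at $t_j$ and gives $A(t_j)=-P(t_j)/B$. Now $A$ matches both defining properties of $Q$, so by the uniqueness in Remark~\ref{rem:alter} we get $Q=A$, whose roots are the $a_j$.

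For (ii), we compare the coefficients of $x^n$ in the corrected identity $P(x)+4b_0A(x)=\prod_{j=1}^{n+1}(t_j-x)$:
$$
(-1)^n\sum_{m=0}^n s_m+4b_0(-1)^n=(-1)^n\sum_{j=1}^{n+1}t_j .
$$
Using $s_0=0$ and solving for $t_{n+1}$ gives \eqref{T}.

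The only delicate point is the sign and leading-coefficient bookkeeping when identifying $G$ with $\pm P$. Everything else is direct coefficient matching together with the uniqueness statement in Remark~\ref{rem:alter}.
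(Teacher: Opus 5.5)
Your argument is correct once the initial sign slip is removed. $G$ has leading coefficient $(-1)^n$, so $G=-P$ and $P+4b_0A=\prod_{j=1}^{n+1}(t_j-x)$; the final write-up should state this from the start rather than through the false start.

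For part (i) you follow essentially the paper's route. Your $A$ is the paper's $R_2$, your $G$ is $xR_1$, and identifying $A=Q$ through the leading coefficient and the interpolation data of Remark~\ref{rem:alter} is exactly what the paper does.

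Part (ii) is where you genuinely differ. The paper works with the same polynomial identity, in the form $T=Q+P/(4b_0)=\frac{1}{4b_0}\prod_{j=1}^{n+1}(t_j-x)$, but compares \emph{constant} terms to get $\prod_{j=1}^{n+1}t_j=4b_0Q(0)$. It then needs the value $Q(0)=Q(s_0)$, which it takes from the long symmetric-polynomial computation of Lemma~\ref{lemma:Q}. You instead compare the coefficients of $x^n$, i.e.\ sums rather than products of roots. This needs no information about $Q$ at all and bypasses Lemma~\ref{lemma:Q} completely. In spectral terms, your identity $\sum_{j=1}^{n+1}t_j-\sum_{m=0}^{n}s_m=4b_0$ is just $\operatorname{tr}L(\pi)-\operatorname{tr}L(0)=4b_0$, since these are the eigenvalues of $L(\pi)$ and $L(0)$.

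Your route is shorter and shows that (ii) is logically independent of Lemma~\ref{lemma:Q}. The paper's route costs it nothing extra only because Lemma~\ref{lemma:Q} is needed anyway for the interlacing result, Lemma~\ref{lemma:interlace}, used in the inverse problem.
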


\begin{proof}
(i)
We define the function $R(x)$ by
$$R(x)\ceq \sum_{j=1}^n\frac{b_j}{a_j-x}+1.$$
One has $F(x)=x R(x)$, whence 
\begin{gather}\label{Rzeros}
R(s_j)=0,\quad j=1,\dots,n.
\end{gather}
	Reducing $R(x)$ to a common denominator, we get
	$$
	R(x)=\dfrac{R_1(x)}{R_2(x)},
	$$
	where $$R_2(x)=\prod_{j=1}^n (a_j-x),\qquad R_1(x)=R(x)R_2(x)=\prod_{j=1}^n (a_j-x)+ \sum_{j=1}^n  b_j \prod_{\substack{k=1\\k\ne j}}^n(a_k-x).
	$$
	Then, due to \eqref{Rzeros}, we have 
	\begin{gather}\label{R1zeros}
	R_1(s_j)=0,\quad j=1,\dots,n. 
	\end{gather}
Using \eqref{R1zeros} and the facts that $\deg R_1(x)=n$, the leading coefficient of $R_1(x)$ is $(-1)^n$, the numbers $s_j$ are pairwise distinct, and $s_0=0$, we infer
	$$
	R_1(x) =\prod_{i=1}^n (s_i-x)=-x^{-1}P(x),
	$$
	where the polynomial $P(x)$ is given in \eqref{Pdef}.
	Then we have  $R_2(x)=-\dfrac{P(x)}{xR(x)}$,
	whence, taking into account that $t_j\,\ j=1,\dots, n+1.$ are pairwise distinct  solutions of $x R(x)=4b_0$, we get
	\begin{equation}\label{eq::R2ti}
		R_2(t_j)=-\dfrac{P(t_j)}{t_jR(t_j)}=-\dfrac{P(t_j)}{4b_0},\quad j=1,\dots, n+1.
	\end{equation} 
	Thus $R_2(x)$  
	\begin{itemize}
		\item[1.] is a polynomial of degree $n$,
		\item[2.] its leading coefficient is $(-1)^n$,
		\item[3.] its graph passes through the $n$ distinct points\,\footnote{The graph of  $R_2(x)$ passes also through the point
			$\left(t_{n+1},  \dfrac{-P(t_{n+1})}{4b_0}\right)$ (see \eqref{eq::R2ti}), but this fact is not used in the proof of (i).
			} $$\left(t_j,  \dfrac{-P(t_j)}{4b_0}\right),\quad  j =1,\dots, n.$$
	\end{itemize}
	However, the polynomial $Q(x)$ satisfies the same properties 1.-3. -- see Remark~\ref{rem:alter}.
	 Since the properties 1.-3. determine the polynomial uniquely, we conclude that
	\begin{equation}\label{eq::Q=R2}
		\prod_{j=1}^n (a_j-x)=R_2(x)=Q(x),
	\end{equation}
	which proves the statement (i). \smallskip
	
	\noindent (ii)  
	We consider the following polynomial of degree $n+1$:
	$$T(x)=Q(x)+\dfrac{P(x)}{4b_0}.$$
	The leading coefficient of $T(x)$ is $(-1)^{n+1}/(4b_0)$, its free
	coefficient is $Q(0)$, and   
 the pairwise distinct numbers $t_i$, $i=1,\dots, n+1$ are the roots of $T(x)$, which follows from  \eqref{eq::R2ti}--\eqref{eq::Q=R2}. Then by Vieta's theorem we obtain
	$
	\prod_{i=1}^{n+1} t_i= 4b_0 Q(0),
	$
	whence, 
	\begin{equation}\label{eq::TQ0}
		t_{n+1}=\dfrac{4b_0 Q(0)}{\prod_{i=1}^n t_i}.
	\end{equation}
	Finally, using Lemma \ref{lemma:Q} and taking into account that
	$s_0=0$, one has
	\begin{align} 
		Q(0)
		=\left(\prod_{i=1}^n t_i\right)\left(1+\dfrac1{4b_0}\left(\sum_{i=1}^n s_i-\sum_{i=1}^nt_i\right)\right)\label{Q0}
	\end{align}
	Substituting \eqref{Q0} into \eqref{eq::TQ0} we 
	arrive at the formula \eqref{T}.
 	The lemma is proven.
\end{proof}

\section{Inverse problem}
\label{sec5}

In this section we solve the inverse problem:
for given $s_j$ and $t_j$, we construct an operator $\L$ defined by \eqref{L:action}
such that its spectrum admits the representation \eqref{th1:eq}
with exactly these prescribed values of $s_j$ and $t_j$.
Namely, we have the following result.

\begin{theorem}\label{th2}
	Let $s_j$, $j=0,\dots,n$, and $t_j$, $j=1,\dots,n+1$, be given numbers satisfying \begin{gather}\label{st+}
0=s_0< t_1,\qquad t_j< s_j< t_{j+1},\ j=1,\dots,n.
\end{gather}
	Using these numbers, we define   $b_0>0$ by
	\begin{gather}\label{b0}
	b_0\ceq \dfrac14\sum_{j=0}^{n}\left(t_{j+1}-s_j\right),
	\end{gather}
	and the polynomial $Q(x)$ of degree $n$ by 
\begin{gather}\label{Qdef:new}
Q(x)=\prod_{i=1}^n(t_i-x)-{1\over 4 b_0}\sum_{j=1}^n \left( \prod_{m=0}^n (s_m-t_j) \prod_{\substack{i=1\\i\ne j}}^n\dfrac{x-t_i}{t_j-t_i}\right).
\end{gather}	
Let $a_j$, $j=1,\dots,n$, be the roots of $Q$, ordered according to increasing absolute value. 	
\smallskip

Then the following holds:	

\begin{itemize}	
	
\item[(i)] The numbers $a_j$, $j=1,\dots,n$  are positive and pairwise distinct, moreover, they satisfy
the interlacing property 
\begin{gather}\label{a:in:ts+}
t_j<a_j<s_j,\quad j=1,\dots,n.
\end{gather}

\item[(ii)] Using the above given  $s_j$, $j=1,\dots,n$, and the previously defined   $a_j$, $j=1,\dots,n$, we define  the numbers $b_j$, $j=1,\dots,n$, by
	\begin{gather}\label{bj}
		b_j\ceq (s_j-a_j)\prod_{\substack{i=1\\i\ne j}}^n\dfrac{s_i-a_j}{a_i-a_j},
		\quad j=1,\dots,n.
	\end{gather}
	Then $b_j$ are positive, moreover, one has
\begin{gather}\label{SLAE}
 	\sum_{j=1}^n\frac{b_j}{a_j-s_i}+1=0,\quad i=1,\dots,n.
\end{gather}\smallskip
	
\item[(iii)] Using the numbers $a_j$, $j=1,\dots,n$, and $b_j$, $j=0,\dots,n$, defined   above, we construct the graph $\Gamma$ by \eqref{graph:structure}--\eqref{mu:m}, with the functions $\mu$ and $m$ given by
	$$
	\mu_j=b_j,\ j=0,\dots,n,
	\qquad m_0=1,\qquad
	m_j=b_j/a_j,\ j=1,\dots,n.
	$$
	Then the spectrum of the operator $\L$, defined by \eqref{L:action}, admits a representation 
\begin{gather*}
\sigma(\L)=[0,t_{n+1}]\setminus\left( \bigcup_{j=1}^n (t_j,s_j)\right),
\end{gather*}	
	where
	$s_i$, $j=1,\dots,n$,  $t_j$, $j=1,\dots,n+1$, are exactly the numbers given above.

\end{itemize}
\end{theorem}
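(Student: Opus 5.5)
The plan is to verify (i) with Lemma~\ref{lemma:interlace}, (ii) with a partial fraction expansion, and (iii) by recognising the numerator of $F(x)-4b_0$ and then invoking Theorem~\ref{th1}.

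\emph{Part (i).} Since every term $t_{j+1}-s_j$ in \eqref{b0} is positive by \eqref{st+}, we get $b_0>0$. We apply Lemma~\ref{lemma:interlace} with $B=4b_0$, so we only need to check \eqref{Bcond}. By \eqref{b0},
$$4b_0=t_{n+1}+\sum_{j=1}^n t_j-\sum_{j=1}^n s_j.$$
Hence \eqref{Bcond} reduces to $0<t_{n+1}-s_n$, which holds by \eqref{st+}. Lemma~\ref{lemma:interlace} then gives \eqref{a:in:ts+}. Positivity and pairwise distinctness of the $a_j$ follow from \eqref{a:in:ts+}, because the intervals $(t_j,s_j)$ lie in $(0,\infty)$ and are disjoint.

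\emph{Part (ii).} Positivity of $b_j$ is a sign count using \eqref{st+} and \eqref{a:in:ts+}. The factor $s_j-a_j$ is positive. For $i<j$ we have $s_i<t_{i+1}\le t_j<a_j$, so both $s_i-a_j$ and $a_i-a_j$ are negative. For $i>j$ both $s_i-a_j$ and $a_i-a_j$ are positive. Hence every factor in \eqref{bj} is positive.

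For \eqref{SLAE}, consider the rational function
$$\prod_{i=1}^n\frac{s_i-x}{a_i-x}.$$
Its numerator and denominator have the same degree and the same leading coefficient, and its poles $a_j$ are simple. Its partial fraction expansion is therefore $1+\sum_j c_j/(a_j-x)$. The residue computation at $x=a_j$ gives $c_j=\prod_i(s_i-a_j)/\prod_{i\ne j}(a_i-a_j)$, which equals $b_j$. Evaluating this identity at $x=s_i$ yields \eqref{SLAE}.

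\emph{Part (iii).} With the stated weights, $\mu_j/m_j=a_j$ and $\mu_j/m_0=b_j$, so \eqref{ab:notations} returns exactly our $a_j$ and $b_j$; moreover \eqref{aa} holds by (i). By (ii), $F(x)=xR(x)$ with $R(x)=\prod_i(s_i-x)/\prod_i(a_i-x)$. Hence the roots of $F$ are exactly $0=s_0,s_1,\dots,s_n$.

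The main step is identifying the roots of $F(x)=4b_0$. Since $\prod_i(a_i-x)=Q(x)$ (both have leading coefficient $(-1)^n$ and the same roots),
$$F(x)-4b_0=-\frac{P(x)+4b_0Q(x)}{Q(x)},$$
with $P$ as in \eqref{Pdef}. The numerator $T(x)=P(x)+4b_0Q(x)$ has degree $n+1$ and leading coefficient $(-1)^{n+1}$. By Remark~\ref{rem:alter} it vanishes at $t_1,\dots,t_n$. Let $r$ be its remaining root.

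To find $r$ we use Vieta, as in Lemma~\ref{lemma:properties}(ii). Since $P(0)=0$, the constant term of $T$ is $4b_0Q(0)$, so $r\prod_{i=1}^n t_i=4b_0Q(0)$. Lemma~\ref{lemma:Q} with $\ell=0$ gives
$$Q(0)=\prod_{i=1}^n t_i\Bigl(1+\tfrac{1}{4b_0}\bigl(\textstyle\sum s_i-\sum t_i\bigr)\Bigr).$$
Therefore $r=4b_0+\sum_i(s_i-t_i)=t_{n+1}$ by \eqref{b0}.

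None of the $t_j$ is a pole, since the $a_j$ lie strictly inside the $(t_j,s_j)$. So the solutions of $F(x)=4b_0$ are exactly $t_1<\dots<t_{n+1}$. Theorem~\ref{th1} then gives the claimed spectrum.
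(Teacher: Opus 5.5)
Your proof is correct. Part (i) is the same as the paper's argument, but (ii) and (iii) take a different, more direct route.

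For \eqref{SLAE}, the paper treats the equations as a linear system in the unknowns $b_j$ and solves it by induction on $n$, eliminating one equation at a time. You instead prove the rational-function identity $1+\sum_j b_j/(a_j-x)=\prod_i(s_i-x)/(a_i-x)$ by a residue computation. Evaluating it at $x=s_i$ gives \eqref{SLAE} and avoids any discussion of solvability of the system.

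For (iii), the paper works with the band edges $\wt s_j,\wt t_j$ supplied by Theorem~\ref{th1}. It identifies $\wt s_j=s_j$ by counting zeros of $F$, and applies the forward result Lemma~\ref{lemma:properties}(i) to get $\wt Q=Q$, hence $\wt T=T$. An inclusion-and-ordering argument relative to $a_n$ then gives $\wt t_j=t_j$ for $j\le n$, and Lemma~\ref{lemma:properties}(ii) handles $\wt t_{n+1}$. You instead reuse your identity from (ii) to write $F(x)-4b_0=-T(x)/Q(x)$ with $T=P+4b_0Q$, and read off the zeros directly. The zeros $t_1,\dots,t_n$ come from the interpolation property in Remark~\ref{rem:alter}. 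The remaining zero equals $t_{n+1}$ by Vieta together with Lemma~\ref{lemma:Q} at $\ell=0$; this is the computation from the proof of Lemma~\ref{lemma:properties}(ii), applied to the constructed $Q$ rather than to $\wt Q$. Noting that no $t_j$ is a pole completes the argument.

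Your route is shorter and self-contained: it uses only Lemmas~\ref{lemma:Q} and~\ref{lemma:interlace} and never needs the tilde quantities or Lemma~\ref{lemma:properties}. The paper's route rests on forward statements: for any admissible weights normalized by $m_0=1$, the $a_j$ must be the zeros of $Q$ and $4b_0$ must be given by \eqref{T}. It therefore also explains why $b_0$ and the $a_j$ have to be chosen this way. Your direct verification gives existence only, which is all the theorem asserts.
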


\begin{proof}
(i) By Lemma~\ref{lemma:interlace}, the numbers $a_j$ satisfy \eqref{a:in:ts+}
provided  \eqref{Bcond} holds with
$
B=4b_0=\sum_{j=0}^{n} (t_{j+1}-s_j),
$
that is,
\begin{gather*}
	\sum_{j=1}^n t_j
	<
	\sum_{j=0}^{n} (t_{j+1}-s_j)
	+
	\sum_{j=1}^{n-1} s_j.
\end{gather*}
Since $s_0=0$, the above inequality is equivalent to $s_n<t_{n+1}$, which indeed holds
by \eqref{st+}. Hence, the above choice of $a_j$, $j=1,\dots,n$,
is admissible: all $a_j$ have the   property \eqref{a:in:ts+}, in particular, they are positive and satisfy \eqref{aa}.\smallskip

\noindent(ii) It follows from \eqref{st+} and already established \eqref{a:in:ts+} that
$$
\sgn(s_i-a_j)=\sgn(a_i-a_j)
\quad \text{for all } i\neq j,
\qquad
\sgn(s_j-a_j)=1.
$$
Hence, the numbers $b_j$, $j=1,\dots,n$, defined by \eqref{bj} are positive.

Regarding \eqref{SLAE} as a system of $n$ linear algebraic equations
 for the unknowns $b_j$, $j=1,\dots,n$, it remains to show that
 its solution is given by \eqref{bj}.
 
 	We prove the above statement by induction. For $n=1$, the claim is
 	obvious. Assume that it has been established for $n=N-1$.
 	We now prove it for $n=N$.
 	Multiplying the $i$-th equation in \eqref{SLAE} ($i=1,\dots,N$)
 	by $a_N-s_i$ and then subtracting the $N$-th equation from the
 	first $N-1$ equations we arrive at the new system of $N-1$ equations
 	\begin{gather*}
 		 \sum_{j=1}^{N-1}{\widehat b_j\over
 			a_j-s_i}+1,\quad  i=1,\dots, N-1,
 	\end{gather*}
 	where the new unknowns $\widehat b_j$, $j=1,\dots, N-1$ are
 	expressed in terms of $b_j$ as follows,
 	\begin{gather}\label{new}
 		\widehat b_j=b _j\ds{a_N-a_j\over s_N-a_j},\quad j=1,\dots, N-1
 	\end{gather}
The system \eqref{new} coincides with the system \eqref{SLAE} for $n=N-1$.
Therefore, by the induction hypothesis, we obtain
 	\begin{gather}\label{syst_sol_ind}
 		\widehat b_j= (s_j-a_j)\prod_{\substack{i=1\\i\ne j}}^{N-1}\dfrac{s_i-a_j}{a_i-a_j},\qquad j=1,\dots,N-1.
 	\end{gather}
 	Combining \eqref{new} and \eqref{syst_sol_ind}, we conclude that $b_j$, $j=1,\dots,N-1$, satisfy \eqref{bj}. The validity
 	of \eqref{bj} for $b_N$ follows   from the symmetry of
 	the system \eqref{SLAE}.
\smallskip
 
(iii) By Theorem \ref{th1} (taking into account \eqref{ab:notations})  we have
\begin{equation*}
	\sigma(\L)=\displaystyle\bigcup_{j=1}^{n+1}[\wt s_{j-1},\wt t_{j}],
\end{equation*}
where $\wt s_0=0$, and   $\wt s_1< \dots<\wt s_{n}$  satisfy
\begin{equation}\label{wtsj:eq}
	\sum_{j=1}^n\dfrac{b_j}{a_j-\wt s_i}+1 =0,\quad i=1,\dots,n,
\end{equation}
while $\wt t_1< \dots<\wt t_{n+1}$ satisfy
\begin{equation*}
	\wt t_i\left(\sum_{j=1}^n\dfrac{b_j}{a_j-\wt t_i}+1\right)=4b_0,\quad i=1,\dots,n+1.
\end{equation*}

We define the polynomial $\wt Q(x)$ of degree $n$  
by \eqref{Qdef:new}, but with $\wt s_j$, $\wt t_j$ instead of $ s_j$, $ t_j$. respectively.
We also introduce two polynomials of degree $n+1$,
$$
T(x)\ceq Q(x)+\dfrac{P(x)}{4b_0}\text{\quad and\quad }
\wt T(x)\ceq \wt Q(x)+\dfrac{\wt P(x)}{4b_0},
 $$
where $P(x)\ceq\prod_{j=0}^n (s_j-x)$ and $\wt P(x)\ceq\prod_{j=0}^n (\wt s_j-x)$. 
The numbers $t_j$, $j=1,\dots,n$ (respectively, $\widetilde t_j$, $j=1,\dots,n$) are the roots of $T(x)$ (respectively, $\widetilde T(x)$); this follows immediately by substituting these numbers into the corresponding polynomials, see Remark~\ref{rem:alter}.
Furthermore, the number $\wt t_{n+1}$ is also the root of $\wt T(x)$ -- see the beginning of the 
proof of Part (ii) of Lemma~\ref{lemma:properties}.

Using \eqref{SLAE} (resp. \eqref{wtsj:eq}), we conclude that $s_j$ (resp. $\wt s_j$)  are all $n+1$ roots of the rational function $F(x)$ \eqref{F} (see Figure \ref{fig2}). Hence, since
$s_j$ and $\wt s_j$ are enumerated in ascending order, we conclude that
\begin{equation}\label{s:wts}
s_j = \wt s_j,\quad j=1,\dots,n.
\end{equation}
Recall the numbers $a_j$ are  the roots of the polynomial $Q(x)$ \eqref{Qdef:new} and (see \eqref{a:in:ts+}) they are pairwise distinct.
By Lemma~\ref{lemma:properties}(i) $a_j$ are also the roots of the polynomial $\wt Q(x)$.
Both $Q$ and $\wt Q$ are of degree $n$ and their leading coefficient 
is $(-1)^{n}$. Hence the polynomials $Q(x)$ and $\wt Q(x)$ do coincide;
consequently, by \eqref{s:wts}, we have
$$
T(x)=\wt T(x),\quad \forall x,
$$
whence 
\begin{gather}\label{inclusion}
\{t_1<\dots<t_n\}\subset \{\wt t_1<\dots<\wt t_n<\wt t_{n+1}\}.
\end{gather}
However, since $\wt t_j<a_n$ for $j=1,\dots,n$ and $\wt t_{n+1}>a_n$
(in fact, \eqref{interlace} holds with $\wt t_j$ in place of $t_j$),
while, by \eqref{a:in:ts+}, we have
$t_j<a_n$ for $j=1,\dots,n$, it follows from \eqref{inclusion} that
\begin{equation}\label{t:wtt}
t_j = \wt t_j,\quad j=1,\dots,n.
\end{equation}

Finally, using Lemma~\ref{lemma:properties}(ii) and  
\eqref{s:wts}, \eqref{t:wtt},  
we get
\begin{gather}\label{T:wtT}
 \wt t_{n+1}=  4b_0+\sum_{i=1}^{n}(\wt s_i-\wt t_i)=
 4b_0+\sum_{i=1}^{n}(  s_i-  t_i)=t_{n+1},
\end{gather}
where on the last step we substitute the expression \eqref{b0} for $b_0$
and use the fact that $s_0=0$.

The desired result follows from \eqref{s:wts}, \eqref{t:wtt}, \eqref{T:wtT}.
The theorem is proven.
\end{proof}

\begin{remark}
It follows immediately from the theorem above and its proof that, given the graph $\mathbb Z$ equipped with the constant positive edge weight $\mu_0$ and vertex weight $m_0\equiv 1$, one can ``decorate'' it by attaching periodically arranged ``bristles'' with suitably chosen weights so that the spectrum of the Laplacian on the resulting weighted graph possesses $n$ gaps, which coincide with the prescribed intervals $(t_j,s_j)$. The upper edge of the spectrum of this operator will be  given by 
$$
t_{j+1}=4 {\mu_0}+\sum_{j=1}^{n}(s_j-t_j).
$$
\end{remark}

\begin{remark}\label{rem:comparison}
At the end of this section, we compare our results
with the one obtained in \cite{BK15} for quantum graphs.	
In that paper, a $\mathbb Z^n$-periodic metric graph $\Gamma$
and a family of Hamiltonians  $\mathscr{A}_\varepsilon$
on $\Gamma$ acting as $-\frac1\varepsilon\frac{\mathrm{d}^2}{\mathrm{d} x^2}$ on the edges of $\Gamma$, subject to appropriate (the so-called $\delta'$-type)   conditions at the vertices is considered; where $\varepsilon>0$ is a small parameter. It was shown that, as $\varepsilon\to0$, the spectrum of $\mathscr A_\varepsilon$ possesses at least $n$ gaps ($n\in\mathbb N$ being prescribed in advance). The first $n$ gaps converge as $\varepsilon\to 0$ to certain intervals whose location can be effectively controlled by an appropriate choice of the geometry of $\Gamma$ (resembling the decorated graphs from \cite{SA00,Ku05}) and of the coupling constants appearing in the interface conditions. Any remaining gaps escape to infinity as $\varepsilon\to0$.

The inverse problem solved in \cite{BK15} is considerably simpler than the one studied in the present work. The right endpoints of the gaps are given by the roots of an \eqref{F}-type function $F(x)$, whereas the left endpoints are expressed explicitly in terms of those coupling constants. Thus, the left gap edges can be controlled very easily, while controlling the right edges requires solving an \eqref{SLAE}-type system. This is substantially simpler than the situation in the present paper, where the right edges are determined by the equation \eqref{t:eq}.

Finally, we note that the result of \cite{BK15} was further refined in \cite{Kh20}. In particular, it was shown how to ensure, for fixed (sufficiently small) $\varepsilon$, the exact coincidence of the left endpoints of the first $n$ spectral gaps with prescribed values, however, the coupling constants appearing in the interface conditions are not determined explicitly in this approach.
\end{remark}

\section{Acknowledgements}

A. K. is grateful to Excellence Project FoS UHK 2204/2025-2026 for the   support.

\end{document}